\theoremstyle{plain}		\newtheorem{theorem}{Theorem}[section]
				\newtheorem{corollary}[theorem]{Corollary}
\numberwithin{equation}{section}
\newcommand*{\bR}{\ensuremath{\mathbb{R}}}
\newcommand*{\bdary}[1]{\partial #1}
\newcommand*{\Wert}{\mathord{\mbox{|\kern-1.5pt|\kern-1.5pt|}}}
\newcommand*{\ie}{\mbox{i.e.}\xspace}
\DeclareMathOperator{\diam}{diam}
\def\XXint#1#2#3{{\setbox0=\hbox{$#1{#2#3}{\int}$}
  \vcenter{\hbox{$#2#3$}}\kern-.5\wd0}}
\title[Sharpness of continuity of quasiconformal mappings]
{Sharpness of uniform continuity of quasiconformal
mappings onto s-John domains}
\author{Chang-yu Guo}
\address[Chang-Yu Guo]{Department of Mathematics and Statistics, University of Jyv\"askyl\"a, P.O. Box 35, FI-40014 University of Jyv\"askyl\"a, Finland}
\email{changyu.c.guo@jyu.fi}
\author{Pekka Koskela}
\address[Pekka Koskela]{Department of Mathematics and Statistics, University of Jyv\"askyl\"a, P.O. Box 35, FI-40014 University of Jyv\"askyl\"a, Finland}
\email{pkoskela@maths.jyu.fi}
\subjclass[2000]{30C62,30C65}
\keywords{$s$-John domain, uniform continuity,  quasiconformal mapping, internal diameter, internal metric}
\thanks{C.Y.Guo and P.Koskela were partially supported by the Academy of Finland grant 131477.}
\begin{document}
\begin{abstract}
We construct examples to show the sharpness of uniform continuity of quasiconformal mappings onto $s$-John domains. Our examples also give a negative answer to a prediction in~\cite{hk05}.
\end{abstract}

\maketitle

\section{Introduction}\label{sec:first}
Recall that a bounded domain $\Omega\subset \bR^n$ is a John domain if there
is a constant $C$ and a point $x_0\in \Omega$ so that, for each $x\in \Omega,$
one can find a rectifiable curve $\gamma:[0,1]\to \Omega$ with $\gamma(0)=x,$
$\gamma(1)=x_0$ and with
\begin{equation} \label{eka}
    Cd(\gamma(t),\bdary\Omega)\ge l(\gamma([0,t]))
\end{equation}
for each $0<t\le 1.$ F. John used this condition in his work on
elasticity~\cite{j61} and the term was coined by
Martio and Sarvas~\cite{ms79}. Smith and Stegenga \cite{ss90} introduced
the more general concept of $s$-John domains, $s\ge 1,$ by replacing \eqref{eka}
with
\begin{equation} \label{toka}
    Cd(\gamma(t),\bdary\Omega)\ge l(\gamma([0,t]))^s.
\end{equation}
The recent studies \cite{aim09,g13b,gkt12} on mappings of finite distortion
have generated new interest in the class of $s$-John domains.


In this paper, we are interested in uniform continuity of those quasiconformal
mappings whose target domain is $s$-John. For the $s=1$ case,
one always has uniform H\"older continuity:
\begin{equation}\label{eq:holder continuity}
|f(x)-f(y)|\leq C d_I(x,y)^{\alpha},
\end{equation}
provided $f:\Omega'\to \Omega$ is a quasiconformal mapping, see \cite{kot01}.
Here $\alpha$ depends on the constant in the 1-John condition for $\Omega,$
on the quasiconformality constant of $f,$ and on the underlying dimension.
The internal distance $d_I(z,w)$ for a pair of points in a domain $G$ is
the infimum of the lengths of all paths that join $z$ to $w$ in $G.$

In~\cite{g13}, the following uniform continuity result for general
$s$-John domains was established.

\begin{theorem}\label{thm:modulus of continuity in higher dimension}
Let $\Omega'\subset\bR^n$ be a domain and $\Omega\subset\bR^n$ be an $s$-John domain
with $s\in (1,1+\frac{1}{n-1})$. Then each quasiconformal mapping
$f:\Omega'\to\Omega$ satisfies
\begin{equation}\label{eq:modulus of continuity in higher dimension}
    D_I(f(x'),f(y'))\leq C\Big(\log\frac{1}{Cd_I(x',y')}\Big)^{-\frac{1}{s-1}}
\end{equation}
for every pair $x',y'$ of distinct points in $\Omega'$, where $D_I$ is
defined by
taking the infimum of the diameters over all rectifiable curves in $\Omega$
joining the desired pair of points.
\end{theorem}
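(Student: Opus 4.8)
The plan is to establish the equivalent estimate $\delta_0:=\diam f(\gamma')\le C\bigl(\log\tfrac1{C\ell(\gamma')}\bigr)^{-1/(s-1)}$ for every rectifiable curve $\gamma'$ joining $x'$ to $y'$ in $\Omega'$; taking $\gamma'$ with $\ell(\gamma')\le 2d_I(x',y')$ and noting $D_I(f(x'),f(y'))\le\delta_0$ then yields \eqref{eq:modulus of continuity in higher dimension}. Equivalently, one must show $\ell(\gamma')\ge c\exp(-C\delta_0^{-(s-1)})$. Two cases are easy: if $\delta_0$ is bounded below there is nothing to prove after enlarging $C$; and if $f(x')$ lies in a fixed compact subset of $\Omega$, then for small $\delta_0$ the continuum $E:=f(\gamma')$ stays well inside $\Omega$, and interior quasisymmetry of quasiconformal maps yields a power bound $\ell(\gamma')\gtrsim\delta_0^{\alpha}$, far stronger than claimed. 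So assume $\delta_0$ small with $E$ close to $\partial\Omega$.

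The heart of the matter is a lower bound for the $n$-modulus of the family $\Gamma$ of curves joining $E$ to a fixed central continuum $F_0:=\overline{B(x_0,r_0/2)}$ (where $x_0$ is the $s$-John centre and $r_0=\dist(x_0,\partial\Omega)$) inside $\Omega$, namely
\[
   M_n(\Gamma)\ \gtrsim\ \delta_0^{\,(s-1)(n-1)} .
\]
This is where the $s$-John condition is used. Along the carrot from a point $p\in E$ to $x_0$ one has $\dist(\cdot,\partial\Omega)\gtrsim(\text{arclength from }p)^s$, so a Whitney tube of cross-section $\asymp\dist(\cdot,\partial\Omega)$ around it consists of curves lying in $\Gamma$ whose $n$-modulus is comparable to $\bigl(\int\ell^{-s}\,d\ell\bigr)^{-(n-1)}\asymp\dist(p,\partial\Omega)^{(s-1)(n-1)/s}$. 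If $E$ reaches depth $\ge\delta_0^{\,s}$ a single such tube already gives the bound. If instead $E$ clings to $\partial\Omega$, one uses that $E$ is connected of diameter $\delta_0$: by a dyadic stopping-time argument one produces, at some depth scale $\eta<\delta_0^{\,s}$, a family of $\gtrsim\delta_0/\eta^{1/s}$ carrot tubes with essentially disjoint (width $\asymp\eta$, length $\asymp\eta^{1/s}$) necks, so that, the supports being disjoint there,
\[
   M_n(\Gamma)\ \gtrsim\ \frac{\delta_0}{\eta^{1/s}}\cdot\eta^{(s-1)(n-1)/s}\ =\ \delta_0\,\eta^{[(s-1)(n-1)-1]/s}\ \gtrsim\ \delta_0^{\,(s-1)(n-1)} ,
\]
the last inequality using $\eta<\delta_0^{\,s}$ together with the crucial fact $(s-1)(n-1)<1$, i.e.\ $s<1+\tfrac1{n-1}$; equivalently this is the range in which $\bigl|\{x\in\Omega:\dist(x,\partial\Omega)<t\}\bigr|\lesssim t^{\,n-(n-1)s}$, which governs how thin a collar a diameter-$\delta_0$ continuum can hide in.

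It remains to transfer and conclude. Since $f^{-1}$ is $K$-quasiconformal and $E,F_0$ are taken to preimages, $f^{-1}(\Gamma)$ is the family of curves in $\Omega'$ joining the trace $|\gamma'|$ to the fixed compact set $F_0':=f^{-1}(F_0)$, and $M_n(f^{-1}(\Gamma))\ge K^{-1}M_n(\Gamma)\gtrsim\delta_0^{\,(s-1)(n-1)}$. On the other hand $F_0'$ lies at a fixed positive distance from $\partial\Omega'$, whereas (for $\delta_0$ small with $E$ near $\partial\Omega$) the set $|\gamma'|$ lies near $\partial\Omega'$; hence $\dist(|\gamma'|,F_0')$ is bounded below by a constant depending only on $f$, and the elementary spherical-ring (Loewner) estimate in $\bR^n$ gives, since $\diam|\gamma'|\le\ell(\gamma')$,
\[
   M_n(f^{-1}(\Gamma))\ \le\ M_n\bigl(\Delta(|\gamma'|,F_0';\bR^n)\bigr)\ \lesssim\ \Bigl(\log\tfrac{c}{\ell(\gamma')}\Bigr)^{-(n-1)} .
\]
Combining the two displays yields $\log\tfrac1{\ell(\gamma')}\lesssim\delta_0^{-(s-1)}$, that is $\ell(\gamma')\ge c\exp(-C\delta_0^{-(s-1)})$, the required estimate. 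The step I expect to be the main obstacle is the capacity bound $M_n(\Gamma)\gtrsim\delta_0^{\,(s-1)(n-1)}$ in the clinging case: $E$ is only known to be a quasiconformal image of a short curve, so its distance-to-boundary profile along $E$ is a priori uncontrolled and it may spiral through the inward-pointing cusps an $s$-John domain permits; carrying out the disjoint-necks construction at that level of generality, rather than on a model cusp, together with the bookkeeping of the easy reductions, is where essentially all the work lies.
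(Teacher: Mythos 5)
This theorem is not proved in the paper at all: it is quoted from the reference \cite{g13}, so there is no in-paper argument to compare against. Judged on its own merits, your architecture is the standard and correct one for results of this type (cf.\ \cite{kot01,hk05}): a modulus lower bound for the condenser $(E,F_0;\Omega)$ in terms of $\delta_0=\diam E$ coming from the $s$-John geometry, a quasi-invariance step, and the spherical-ring upper bound $M_n\lesssim(\log\frac{c}{\ell(\gamma')})^{1-n}$ in the source; the target exponent $\delta_0^{(s-1)(n-1)}$ is exactly what the claimed modulus of continuity requires, and your easy reductions are fine. The problem is that the one step carrying all the content --- the lower bound $M_n(\Gamma)\gtrsim\delta_0^{(s-1)(n-1)}$ --- is not established, and your sketch of it contains a quantitative error that propagates. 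Along the carrot from a point $p$ with $d:=\dist(p,\bdary\Omega)$, the $s$-John condition guarantees width only $\max(d-t,\,C^{-1}t^{s})$ at arclength $t$; hence $\int_0^1 r(t)^{-1}\,dt\approx d^{1-s}$ and the single-tube modulus is $d^{(s-1)(n-1)}$, not $d^{(s-1)(n-1)/s}$ as you claim (your computation implicitly pretends the carrot already has width $d$ out to arclength $d^{1/s}$, which the John condition does not give on $[d,d^{1/s}]$). Consequently a single tube suffices only when $E$ reaches depth comparable to $\delta_0$, not $\delta_0^{s}$, and your case split is drawn in the wrong place.

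In the clinging case the per-neck bound $\eta^{(s-1)(n-1)/s}$ is likewise unjustified: the curves of $\Gamma$ must start on $E$, whose points may sit at depth far smaller than $\eta$, and the modulus of curves traversing a carrot neck from its tip is governed by that (uncontrolled) depth, not by the width $\eta$ at the far end. To start the count at width $\eta$ you would need a Loewner-type estimate connecting the subcontinuum $E\cap B(p,\eta^{1/s})$ to the wide cross-section inside $\Omega\cap B(p,C\eta^{1/s})$, and $\Omega$ is precisely not Loewner near its boundary. Finally, the superadditivity you invoke over the $N$ necks is not automatic: modulus is superadditive only over families supported in disjoint Borel sets, whereas your curves all merge into a common trunk, and the usual truncation-to-the-neck trick runs in the wrong direction (subcurve families have \emph{larger} modulus, so minorization gives an upper, not a lower, bound). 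Each of these can in principle be repaired --- this is essentially the content of the cited paper \cite{g13} and of the concavity hypothesis in \cite{hk05} --- but as written the central estimate is asserted rather than proved, as you yourself concede, so the argument is incomplete exactly where the restriction $s<1+\frac{1}{n-1}$ must do its work.
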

Notice that for all $z,w\in \Omega$,
\begin{align*}
|z-w|\leq D_I(z,w);
\end{align*}
hence the left-hand side of~\eqref{eq:modulus of continuity in higher dimension} can be replaced with $|f(x')-f(y')|$. It was shown in \cite{g13} that the restriction $s<1+\frac{1}{n-1}$ can
be disposed of if $\Omega$ equipped with the quasihyperbolic metric is Gromov
hyperbolic.

Our first result of this paper shows that the requirement that $s<1+\frac{1}{n-1}$ in
Theorem~\ref{thm:modulus of continuity in higher dimension} is sharp when $n=2$. This is somewhat surprising since~\eqref{eq:modulus of continuity in higher dimension} does not degenerate when $s=1+\frac{1}{n-1}$.

\begin{theorem}\label{example:sharpness}
There exist a bounded domain
$\Omega'\subset\bR^2$, a $2$-John domain $\Omega\subset\bR^2$,
and a quasiconformal mapping 
$f:\Omega'\to \Omega$ such that $f$ is not 
uniformly continuous with respect to the metrics $d(x,y)=|x-y|$ in $\Omega$ and $d_I$ in $\Omega'$.
\end{theorem}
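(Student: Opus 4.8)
The plan is to construct $\Omega$, $\Omega'$ and $f$ by hand, as ``rooms and passages'' domains together with a piecewise-defined map. A naive attempt---take $\Omega'$ a disk and hope that some $2$-John target $\Omega$ has so bad a boundary that even the conformal map onto it is not uniformly continuous---cannot succeed, since a $2$-John domain has locally connected boundary, so by Carath\'eodory's theorem the conformal (hence any quasiconformal) map of a disk onto it extends continuously to the closed disk. This is the reason the statement is ``somewhat surprising'', and it forces the construction to put the obstruction into the source domain $\Omega'$.

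I would take $\Omega$ to be a bounded chain of open squares $R_1,R_2,\dots$ (``rooms'') with $\operatorname{side}(R_k)=a_k$, where consecutive rooms are joined by thin rectangular ``corridors'' $P_k$ of width $b_k$ and length $\ell_k$, the rooms placed in $\bR^2$ so that the chain is bounded and accumulates towards a boundary arc. The parameters are tuned so that $\Omega$ is exactly $2$-John but not $s$-John for any $s<2$. The governing relation is $b_k\asymp L_k^2$ with $L_k:=\sum_{j\ge k}(a_j+\ell_j)$: a John curve from a point deep inside $R_k$ to the central room must run back through $P_{k-1},P_{k-2},\dots$, and the tightest instance of $C\,\dist(\gamma(t),\bdary\Omega)\ge \ell(\gamma[0,t])^{s}$ occurs as the curve re-enters $P_k$, where $\dist(\gamma(t),\bdary\Omega)\asymp b_k$ while the accumulated length is $\asymp L_k$. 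With $s=2$ this holds with a definite constant precisely when $b_k\asymp L_k^2$; with $s<2$ it fails for large $k$ and with $s>2$ it is trivial, which is exactly why the verification is delicate: it must be run along the whole extremal curve, at every scale, not just at its endpoints.

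For $\Omega'$ I would use a combinatorially identical chain $R'_1,R'_2,\dots$ with corridors $P'_k$, but reshaped so that inside $\Omega'$ a point of $R'_k$ and a point of $R'_{k+1}$ are joined by a path of length $\asymp a'_k+\ell'_k$ (so that $d_I^{\Omega'}$ of such a pair is small), while the layout in $\Omega$---which is \emph{not} a straight chain---is chosen so that the corresponding images are forced a definite distance $\varepsilon_0$ apart. The map $f$ is then defined room by room and corridor by corridor: on $R'_k$ it is the affine similarity taking $R'_k$ onto $R_k$ (conformal, with scaling factor $\lambda_k$), and on $P'_k$ it is the bi-Lipschitz map interpolating between the two neighbouring similarities. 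Its dilatation is governed by $\sup_k(\lambda_{k+1}/\lambda_k)$ together with the ratios of conformal moduli of corresponding corridors; the parameters must be chosen so that these stay bounded uniformly in $k$, making $f$ a $K$-quasiconformal homeomorphism with $K$ independent of $k$. Matching these moduli is the second place at which $s=2$ forces the estimates to be tight.

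Finally, take $x'_k\in R'_k$ and $y'_k\in R'_{k+1}$. Then $d_I^{\Omega'}(x'_k,y'_k)\le a'_k+\ell'_k+a'_{k+1}\to 0$ by construction, whereas $|f(x'_k)-f(y'_k)|\ge\varepsilon_0$ by the layout of the $R_k$'s; hence $f\colon(\Omega',d_I)\to(\Omega,|\cdot|)$ is not uniformly continuous, which is the assertion. The main obstacle is the simultaneous bookkeeping of three competing requirements: (i) $\Omega$ must be $2$-John---a borderline estimate, since $s=2$ is critical, so $b_k\asymp L_k^2$ must hold with controlled constants and the John inequality must be checked along the extremal curves at all scales; (ii) $f$ must have globally bounded dilatation although its local scaling factor $\lambda_k$ varies with $k$; and (iii) the layout must still yield a fixed $\varepsilon_0>0$ for $|f(x'_k)-f(y'_k)|$, even though $\Omega$ is bounded and its rooms shrink. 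These pull against one another, and it is precisely at $s=1+\tfrac1{n-1}=2$ that they can be reconciled.
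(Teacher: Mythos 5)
There is a decisive obstruction to your approach before any bookkeeping starts. A single chain of rooms and corridors is simply connected (at worst finitely connected), hence Gromov hyperbolic in the quasihyperbolic metric; and by the result of \cite{g13} quoted right after Theorem~\ref{thm:modulus of continuity in higher dimension}, the restriction $s<1+\frac{1}{n-1}$ can be dropped for such targets, so \emph{every} quasiconformal map onto your $\Omega$ would automatically be uniformly continuous from $(\Omega',d_I)$. This is exactly why the paper remarks that the $2$-John domain must be infinitely connected. The paper's $\Omega$ is accordingly not a chain: level $j$ is a full-length rectangle (length $1$, width $2^{-2(j+1)}$) joined to level $j+1$ by $2^{j+1}$ \emph{parallel} thin legs, which creates infinitely many complementary components.

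Your quantitative mechanism is also internally inconsistent. Taking $t=1$ in \eqref{toka} gives $\ell(\gamma)\le (C\,d(x_0,\partial\Omega))^{1/s}$, so in any $s$-John domain every point joins the center by a curve of uniformly bounded length; for a chain this forces $\sum_k\ell_k<\infty$, hence $\operatorname{dist}(R_k,R_{k+1})\le \ell_k\to 0$ and $\operatorname{diam}R_k\to 0$ (infinitely many disjoint squares in a bounded set), so $|f(x'_k)-f(y'_k)|\le \operatorname{diam}R_k+\ell_k+\operatorname{diam}R_{k+1}\to 0$ and no fixed $\varepsilon_0$ survives. Conversely, corridors of fixed length destroy the John property for every $s$. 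The paper evades this by locating the failure of uniform continuity \emph{within a single level}, not between consecutive ones: the level-$j$ piece of $\Omega'$ is a rectangle of length $1/j$ that $f^{-1}$ stretches linearly onto a rectangle of length $1$, so one takes the two ends of the short rectangle as the test pair ($d_I\le 1/j\to 0$, images at distance $\approx 1$). The quasiconformality of the glueing is then checked on explicit exponential-type leg maps $f_j^i$ whose dilatation is bounded uniformly in $i,j$. You would need to rebuild your construction around these two features (infinite connectivity via parallel legs, and intra-level stretching) for the argument to go through.
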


Since each simply (or finitely) connected planar domain is Gromov hyperbolic
when equipped with the quasihyperbolic metric, the $2$-John domain above
is necessarily infinitely connected.

Actually, the results in~\cite{kot01} establish~\eqref{eq:holder continuity} under more general setting than the case of $1$-John domains. Indeed, it was proven there that it suffices to assume that
\begin{equation}\label{eq:kot01}
k_\Omega(x,x_0)\leq C_1\log\frac{1}{d(x,\bdary\Omega)}+C_2
\end{equation}
for some constants $C_1$, $C_2$ and a fixed point $x_0\in \Omega$, where
\begin{align*}
k_\Omega(x,x_0)=\inf_{\gamma_{x}}\int_{\gamma_{x}}\frac{ds}{d(z,\bdary\Omega)}
\end{align*}
is the quasihyperbolic distance between $x$ and $x_0$; the infimum is taken over all rectifiable curves $\gamma_x$ in $\Omega$ which join $x$ to $x_0$. For $x,y\in\Omega$, there is a (quasihyperbolic) geodesic $[x,y]$ in $\Omega$ with
\begin{align*}
k_\Omega(x,y)=\int_{[x,y]}\frac{ds}{d(z,\bdary\Omega)},
\end{align*}
see~\cite{go79}.
In~\cite{hk05},~\eqref{eq:kot01} was further replaced with
\begin{align*}
k_\Omega(x,x_0)\leq \phi\Big(\frac{1}{d(x,\bdary\Omega)}\Big),
\end{align*}
under the assumption that
\begin{align*}
\int_1^\infty \frac{dt}{\phi^{-1}(t)}<\infty.
\end{align*}
A uniform continuity estimate of the type~\eqref{eq:holder continuity} was established under the additional assumption that $t\mapsto \Phi(t)^{-a}$ is concave for some $a>n-1$, where
\begin{align*}
\Phi(t)=\psi^{-1}(t)\quad\text{and}\quad \psi(t)=\int_t^\infty \frac{ds}{\phi^{-1}(s)}.
\end{align*}
This concavity assumption was speculated in~\cite{hk05} to be superfluous. Our construction refutes this speculation.

\begin{corollary}\label{coro:co-example to hk}
There exist a bounded domain
$\Omega\subset\bR^2$ and a point $x_0\in \Omega$ such that
\begin{equation}\label{eq:QH growth}
k_\Omega(x,x_0)\leq Cd(x,\bdary\Omega)^{-\frac{1}{2}}
\end{equation}
for all $x\in \Omega$ and a quasiconformal mapping $f:\Omega'\to \Omega$, where $\Omega'\subset\bR^2$,  such that $f$ is not uniformly continuous with respect to the metrics $d(x,y)=|x-y|$ in $\Omega$ and $d_I$ in $\Omega'$.
\end{corollary}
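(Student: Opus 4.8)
The plan is to reuse the domains $\Omega'$, $\Omega$ and the quasiconformal mapping $f$ constructed in the proof of Theorem~\ref{example:sharpness}. That theorem already supplies a $2$-John (hence bounded) domain $\Omega\subset\bR^2$ together with a mapping $f\colon\Omega'\to\Omega$ that fails to be uniformly continuous with respect to $d(x,y)=|x-y|$ in $\Omega$ and $d_I$ in $\Omega'$, which is exactly the last assertion of the corollary. Thus the only genuinely new point is the quasihyperbolic growth bound~\eqref{eq:QH growth}, $k_\Omega(x,x_0)\le Cd(x,\partial\Omega)^{-1/2}$, for a suitably chosen centre $x_0\in\Omega$ (for instance the John centre).

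To prove~\eqref{eq:QH growth} I would return to the explicit description of $\Omega$ and split it into its bulk and its cuspidal parts. On the bulk the domain is, by construction, uniform (a finite union of well-behaved pieces), so there $k_\Omega(x,x_0)\lesssim\log\frac1{d(x,\partial\Omega)}+1$; since $\log\frac1t=o(t^{-1/2})$ as $t\to0^+$, this is dominated by $d(x,\partial\Omega)^{-1/2}$. A point $x$ lying inside a cusp of $\Omega$ is handled by the model computation for a planar power cusp $\{(u,v):0<u<1,\ |v|<u^{2}\}$, the profile that forces the $2$-John exponent: there $d((u,0),\partial\Omega)\asymp u^{2}$, while integrating $1/d$ along the axis from $(u,0)$ to the mouth of the cusp gives $k_\Omega\lesssim\int_u^{1}t^{-2}\,dt\asymp u^{-1}\asymp d((u,0),\partial\Omega)^{-1/2}$; adding the bounded cost of reaching $x_0$ through the bulk yields~\eqref{eq:QH growth}.

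Once~\eqref{eq:QH growth} is in hand, the corollary is immediate from the framework recalled from~\cite{hk05}. Taking $\phi(u)=Cu^{1/2}$ we have $\phi^{-1}(t)=t^{2}/C^{2}$, so $\int_1^\infty\frac{dt}{\phi^{-1}(t)}=C^{2}<\infty$ and the standing hypothesis of~\cite{hk05} is met; but $\psi(t)=\int_t^\infty\frac{ds}{\phi^{-1}(s)}=C^{2}/t$, hence $\Phi(t)=\psi^{-1}(t)=C^{2}/t$ and $t\mapsto\Phi(t)^{-a}=C^{-2a}t^{a}$ is strictly convex for every $a>n-1=1$, so the concavity requirement fails for all admissible $a$. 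Since $f$ is nevertheless not uniformly continuous, the concavity assumption cannot be removed, which is precisely the content of the corollary and refutes the speculation of~\cite{hk05}.

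The hard part is the second step. A generic planar $2$-John domain only satisfies $k_\Omega(x,x_0)\lesssim d(x,\partial\Omega)^{-1}$, and this is sharp — for instance for a $2$-John domain carrying a neck of width $\asymp d(x,\partial\Omega)^{2}$ at distance $\asymp d(x,\partial\Omega)$ from $x$ — so~\eqref{eq:QH growth} is strictly stronger than the $2$-John property and really does depend on the geometry of the cusps in our construction. The remaining work is therefore to check that the construction underlying Theorem~\ref{example:sharpness} produces only clean power cusps of exponent $2$ (no iterated necks, no long spiralling passages), so that the quasihyperbolic metric there grows no faster than the single-cusp rate $d^{-1/2}$ rather than the worst-case $2$-John rate $d^{-1}$.
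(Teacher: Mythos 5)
Your proposal takes exactly the same route as the paper: the paper's proof of this corollary also just reuses $\Omega$, $\Omega'$ and $f$ from Theorem~\ref{example:sharpness} and asserts that~\eqref{eq:QH growth} ``is easy to check'' for the marked base point. Your power-cusp computation correctly supplies that check: at level $j$ of the tower the widths of the rectangle and of the legs are $\asymp 2^{-2j}$ (so $d(x,\partial\Omega)\asymp 2^{-2j}$ there), while the quasihyperbolic cost per level --- both the vertical passage through a leg of length $2^{-k}$ and width $2^{-2k}$, and the horizontal displacement of at most $\asymp 2^{-k}$ inside a room of width $2^{-2k}$ needed to reach the next leg --- is $\asymp 2^{k}$, summing to $\asymp 2^{j}\asymp d(x,\partial\Omega)^{-1/2}$.
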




Our next result shows that Theorem~\ref{thm:modulus of continuity in higher dimension} is essentially sharp for all dimensions.
\begin{theorem}\label{example:sharpness2}
Let $n\geq 3$. There exist a bounded domain
$\Omega'\subset\bR^n$, a domain $\Omega\subset\bR^n$ that is 
$s$-John for any $s\in (1+\frac{1}{n-1}, \infty)$,
and a quasiconformal mapping
 $f:\Omega'\to \Omega$ such that 
 $f$ is not uniformly continuous with respect to
  the metrics $d(x,y)=|x-y|$ in $\Omega$ and $d_I$ in $\Omega'$.
\end{theorem}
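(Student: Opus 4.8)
The natural approach is to adapt the planar comb construction behind Theorem~\ref{example:sharpness} to $\bR^{n}$. I would take $\Omega$ to be a fixed ``bulk'' $B_{0}$ (say a Euclidean ball) carrying a countable family of thin tendrils $F_{1},F_{2},\dots$, each attached to $\bdary B_{0}$ along a patch, with the patches shrinking to a single point $p_{0}\in\bdary B_{0}$. Each $F_{k}$ is a \emph{cuspidal} tube: at arclength position $a$ from a cusp vertex $v_{k}\in\bdary\Omega$ its cross-section is an $(n-1)$-ball whose radius follows a prescribed profile $\varrho_{k}(a)$, and $F_{k}$ is arranged (coiled, if needed) so that $\sum_{k}|F_{k}|<\infty$, keeping $\Omega$ bounded. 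Correspondingly $\Omega'=B_{0}\cup\bigcup_{k}F_{k}'$, where the $F_{k}'$ are thin tubes attached along the \emph{same} patches but chosen to have small internal diameter. The mapping $f$ agrees with a fixed explicit quasiconformal map on $B_{0}$ and, on each $F_{k}'$, is an explicit homeomorphism onto $F_{k}$ (a piecewise affine stretch, composed with an unrolling if $F_{k}$ is coiled); these agree on the attaching patches and glue to a homeomorphism $f\colon\Omega'\to\Omega$.

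The first point is to check that $f$ is $K$-quasiconformal with $K$ independent of $k$. As $f$ is already a homeomorphism, it suffices to bound the distortion of each $f|_{F_{k}'}$ uniformly, and for the model tube pieces this reduces to the comparability of the conformal moduli of $F_{k}'$ and $F_{k}$ (and of corresponding slabs along the cuspidal profiles). The dimension enters precisely here: the $n$-modulus of the family of curves joining the two ends of a tube of cross-radius $r$ and arclength $\ell$ in $\bR^{n}$ is comparable to $(r/\ell)^{n-1}$, so the matching of aspect ratios one is forced to impose carries the exponent $n-1$, and this is what ultimately fixes the threshold at $1+\tfrac{1}{n-1}$.

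Next I would verify that $\Omega$ is $s$-John for every $s\in\bigl(1+\tfrac{1}{n-1},\infty\bigr)$. With the John centre placed in $B_{0}$, the only curves that can fail the condition are those running from a point inside some $F_{k}$, back along $F_{k}$, and out into $B_{0}$; away from the tendrils $\Omega$ coincides with $B_{0}$, where the condition is automatic. Along $F_{k}$ one has $\dist(\cdot,\bdary\Omega)\asymp\varrho_{k}(a)$ at arclength position $a$ from the vertex, while the length already travelled out of the cusp is of the same order, so the $s$-John inequality reduces to a pointwise estimate of the form $C\,\varrho_{k}(a)\ge a^{\,s}$, uniform in $k$. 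Choosing the cusp profiles $\varrho_{k}$ compatibly with the modulus matchings of the previous step, this numerical inequality holds for every $k$ with a single constant exactly when $s>1+\tfrac{1}{n-1}$, once a slowly growing correction is inserted into the $\varrho_{k}$ to spoil the left endpoint; this is why the whole open interval, but not its endpoint, is attained.

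Finally, the failure of uniform continuity is witnessed tendril by tendril. For each $k$ I would choose $x_{k}',y_{k}'\in F_{k}'$ so that $d_{I}(x_{k}',y_{k}')\le\diam_{d_{I}}F_{k}'\to 0$, while $f(x_{k}')$ and $f(y_{k}')$ land at points of the stretched tendril $F_{k}$ at Euclidean distance at least a fixed $\delta>0$; this shows that $f$ is not uniformly continuous with respect to $d_{I}$ on $\Omega'$ and $d(x,y)=|x-y|$ on $\Omega$. The step I expect to be the main obstacle is exactly the simultaneous bookkeeping behind all of this: a uniform quasiconformality constant forces the source and target pieces to have comparable moduli, hence comparable aspect ratios; the $s$-John property for the entire interval forces the target cusps not to be too thin relative to their length, quantitatively through the exponent $n-1$; the failure of uniform continuity forces the source tendrils to collapse in the internal metric while their images remain Euclidean-macroscopic; and, on top of this, infinitely many long thin (possibly coiled) target tendrils must be packed disjointly into a bounded region. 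Tuning the parameters of the $F_{k}$, the $F_{k}'$, and the attaching mesh so that the resulting John exponent lands precisely at $1+\tfrac{1}{n-1}$ is the heart of the argument; the cheap alternative of taking $\Omega=\Omega_{2}\times(0,1)^{n-2}$ for the planar example $\Omega_{2}$ of Theorem~\ref{example:sharpness} does not help, since that product is only $2$-John and $2>1+\tfrac{1}{n-1}$ for $n\ge 3$.
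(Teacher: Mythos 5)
Your proposal identifies the right heuristics (modulus matching of tubes via the exponent $n-1$, and the resulting threshold $s>1+\frac{1}{n-1}$), but the architecture you describe cannot be completed, and the paper's construction differs from yours precisely at the problematic point. In your picture each target tendril $F_k$ is a round tube of Euclidean diameter at least a fixed $\delta>0$ whose only escape route to the John centre runs along the tendril itself. Apply \eqref{toka} to a point near the cusp tip of $F_k$: when the John curve has travelled arclength $a\approx\delta$ and is about to exit through the attaching patch, it is still inside the tube, so $d(\gamma(t),\partial\Omega)$ is at most the patch size $\sigma_k$, and the $s$-John condition forces $C\sigma_k\ge(c\delta)^s$. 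Hence the attaching patches cannot shrink to a point, contradicting your setup; worse, each $F_k$ must then contain a ball of fixed radius $\approx\delta^s/C$ near its attaching end, so infinitely many disjoint such tendrils cannot be packed into a bounded $\Omega$ (at least not without coiling that would destroy the Euclidean separation you need for the failure of uniform continuity). If instead you keep the patches of fixed size, then $F_k'$ is attached along a patch of fixed size (since $f|_{B_0}$ is a fixed quasiconformal map) but has internal diameter tending to $0$, so the modulus of the curve family joining the patch to the far end of $F_k'$ blows up while the modulus of the image family in $F_k$ stays bounded, killing uniform quasiconformality. The three requirements you flag as ``the main obstacle'' are not merely hard to tune simultaneously; in this bouquet geometry they are mutually exclusive.

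The paper escapes this by making the macroscopic extent of the target pieces \emph{transverse} to the John escape route. Its $\Omega$ is a chain $\Omega_0,\Omega_1,\dots$ of flat boxes, each of length $1$ but of edge-lengths $2^{-(n-1)j}$ and $2^{-nj}$ in the other directions, consecutive boxes being joined by many short cylindrical legs of radius $2^{-nj}$ and length $\approx j\,2^{-(n-1)j}$. A John curve from level $j$ climbs level by level through nearby legs, so when it reaches level $k$ it has travelled only $\approx k\,2^{-(n-1)k}$ while $d(\cdot,\partial\Omega)\approx 2^{-nk}$; this yields the $s$-John property exactly for $s>\frac{n}{n-1}$ (the factor $k$ spoiling the endpoint), even though every box has diameter $1$. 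The source $\Omega'$ replaces the level-$j$ box by a similarity copy scaled by $\frac{1}{j^2}$, so the map is conformal on the boxes and all moduli match for free, and explicit exponential-profile maps interpolate on the connecting legs. The failure of uniform continuity then comes from pairs at the two ends of the $j$-th box: internal distance $\approx\frac{1}{j^2}$ in $\Omega'$, Euclidean distance $\approx 1$ in $\Omega$. To repair your argument you would have to redesign the target pieces so that their diameter lives in a direction the John curves never traverse while the domain is thin; as written, the proposal has a genuine gap.
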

It would be interesting to know whether one can allow for $s=1+\frac{1}{n-1}$ in Theorem~\ref{example:sharpness2}.

As a by-product of our construction, we obtain the following corollary, which implies that the concavity condition mentioned above is necessary in all dimensions.
\begin{corollary}\label{coro:co-example to hk2}
Let $n\geq 3$. There exist a bounded domain
$\Omega\subset\bR^n$ and a point $x_0\in \Omega$ such that
\begin{equation}\label{eq:QH growth2}
k_\Omega(x,x_0)\leq Cd(x,\bdary\Omega)^{-\frac{1}{n}}\log\frac{C}{d(x,\bdary\Omega)}
\end{equation}
for all $x\in \Omega$ and a quasiconformal mapping $f:\Omega'\to \Omega$, where $\Omega'\subset\bR^n$,  such that $f$ is not uniformly continuous with respect to the metrics $d(x,y)=|x-y|$ in $\Omega$ and $d_I$ in $\Omega'$.
\end{corollary}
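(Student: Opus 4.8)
The plan is to take $\Omega'$, $\Omega$, the base point $x_0$ and the map $f$ to be exactly those produced in the proof of Theorem~\ref{example:sharpness2}: there $f\colon\Omega'\to\Omega$ is a quasiconformal mapping of a bounded domain that fails to be uniformly continuous with respect to $d(x,y)=|x-y|$ on $\Omega$ and $d_I$ on $\Omega'$, and $\Omega$ is $s$-John for every $s>1+\tfrac1{n-1}$. Thus the only new point is the quasihyperbolic growth estimate \eqref{eq:QH growth2}. It is instructive to note first why the $s$-John property does not by itself give it. If $\gamma$ is a John curve from $x$ to $x_0$, parametrised by arclength $\ell\in[0,L]$, then $d(\gamma(\ell),\partial\Omega)\ge\tfrac12 d(x,\partial\Omega)$ for $\ell\le\tfrac12 d(x,\partial\Omega)$ and $d(\gamma(\ell),\partial\Omega)\gtrsim\ell^{\,s}$ afterwards, so
\begin{equation*}
k_\Omega(x,x_0)\le\int_\gamma\frac{ds}{d(z,\partial\Omega)}\lesssim 1+d(x,\partial\Omega)^{-(s-1)}.
\end{equation*}
Letting $s\downarrow 1+\tfrac1{n-1}$ this yields only $k_\Omega(x,x_0)\lesssim d(x,\partial\Omega)^{-\frac1{n-1}-\varepsilon}$ for every $\varepsilon>0$, which is strictly weaker than \eqref{eq:QH growth2} since $\tfrac1{n-1}>\tfrac1n$. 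Hence one must work with the explicit geometry.

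The substantive step is a direct estimate of $k_\Omega(x,x_0)$ on the constructed domain. Concretely, $\Omega$ is assembled from a fixed central piece containing $x_0$ by attaching, at geometrically decreasing scales $r_j$, a chain of cuspidal building blocks of the critical exponent $s_0:=1+\tfrac1{n-1}$; this is precisely what prevents $\Omega$ from being $s_0$-John while keeping it $s$-John for each $s>s_0$. Given $x\in\Omega$, I would locate the generation $j$ to which $x$ belongs, bound $d(x,\partial\Omega)$ from below by the corresponding cross-sectional width, and bound $k_\Omega(x,x_0)$ from above by the quasihyperbolic length of a curve $\sigma$ that runs from $x$ out through the cuspidal necks of generations $j,j-1,\dots$ back to the central piece and then to $x_0$ at bounded cost. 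Writing $\int_\sigma ds/d(z,\partial\Omega)$ as a sum of the contributions of the blocks traversed, each controlled by the reciprocal of the corresponding tip-width (with the logarithmic correction built into the construction), and summing over generations $\le j$, the scale relation linking $d(x,\partial\Omega)$ to $r_j$ turns the result into the right-hand side of \eqref{eq:QH growth2}. The power $d(x,\partial\Omega)^{-1/n}$ is forced by the identity $(s_0-1)/s_0=1/n$ — the exponent a single $s_0$-cusp already produces — and the extra logarithmic factor is the signature of $\Omega$ being only borderline $s_0$-John, i.e.\ of its cross-sections being powers $r^{s_0}$ corrected by a logarithm rather than clean powers.

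I do not expect this to be conceptually difficult, but it is delicate in the bookkeeping, and that is where the work lies. The lower bound for $d(x,\partial\Omega)$ must match, up to multiplicative constants, the width functions actually used in the construction, and the curve $\sigma$ must be chosen so that its quasihyperbolic length is genuinely comparable to $k_\Omega(x,x_0)$ rather than merely an upper bound for a larger quantity; both are chaining estimates of the same kind that already appear in the proof of Theorem~\ref{example:sharpness2}, and should be recyclable with only cosmetic changes. Finally, $n\ge3$ enters only through its role there.
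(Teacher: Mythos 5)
Your proposal is correct and follows the same route as the paper, which simply takes $\Omega$, $\Omega'$ and $f$ from the proof of Theorem~\ref{example:sharpness2}, places $x_0$ at the center of the first rectangle, and asserts that \eqref{eq:QH growth2} is easy to check; your chaining estimate through the generations is the intended verification, and your identification of the exponent via $(s_0-1)/s_0=1/n$ and of the logarithm as the trace of the borderline failure of the $s_0$-John condition is accurate. The only caveat is terminological: the legs are cylinders of radius $2^{-nj}$ and length $j\cdot 2^{-(n-1)j}$ rather than cusps, so the per-block quasihyperbolic cost is length divided by radius, $j\cdot 2^{j}$, which summed over generations gives exactly $d(x,\bdary\Omega)^{-1/n}\log\frac{C}{d(x,\bdary\Omega)}$.
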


In Theorem~\ref{example:sharpness2}, the mapping $f$ actually satisfies
\begin{equation}\label{eq:internal uniform continuity}
D_I(f(x'),f(y'))\leq C\Big(\log\frac{1}{D_{QH}(x',y')}\Big)^{-\frac{1}{s-1}},
\end{equation}
where $D_{QH}(x,y)=\diam [x,y]$. Notice that $D_{QH}(x',y')\geq D_I(x',y')$ and that $D_{QH}(x',y')\leq Cd_I(x',y')$ if $\Omega'$ satisfies a Gehring-Hayman inequality, especially if $\Omega'$ is Gromov hyperbolic when equipped with the quasihyperbolic metric~\cite{bhk01}.

One could then hope that Theorem~\ref{thm:modulus of continuity in higher dimension} extends to hold for all $s>1$ under the Gehring-Hayman assumption and even that~\eqref{eq:internal uniform continuity} holds for all $s>1$. This turns out not to be the case, even though a weaker version of~\eqref{eq:internal uniform continuity} does indeed follow from the results in~\cite{hk05} when $1<s<2$.

\begin{theorem}\label{example:sharp3}
Let $s\in (2,\infty)$ and $n\geq 2$. There exist a bounded domain
$\Omega'\subset\bR^n$, an $s$-John domain $\Omega\subset\bR^n$, both satisfying the Gehring-Hayman inequality, 
and a quasiconformal mapping $f:\Omega'\to \Omega$ such that $f$ fails to satisfy~\eqref{eq:internal uniform continuity}. 
\end{theorem}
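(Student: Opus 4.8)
The plan is to construct $\Omega$ and $\Omega'$ as ``rooms and corridors'' type domains, building on the constructions already used for Theorems~\ref{example:sharpness} and~\ref{example:sharpness2}, but now calibrated so that both domains satisfy a Gehring--Hayman inequality while the corridors are long and thin enough that an $s$-John target with $s>2$ forces a failure of the internal modulus of continuity estimate~\eqref{eq:internal uniform continuity}. Concretely, I would take $\Omega$ to be a disjoint union of cubes $Q_j$ (the ``rooms'') of side roughly $2^{-j}$, each attached to a common central region through a thin cylindrical corridor $T_j$ of length $\ell_j$ and cross-sectional radius $r_j$; the $s$-John condition along $T_j$ forces a relation of the form $r_j \gtrsim \ell_j^{-(s-1)}$ after normalizing lengths, so choosing $\ell_j \to \infty$ slowly (say $\ell_j \approx \log(1/2^{-j}) \approx j$) makes the rooms accessible only through corridors whose diameters shrink polynomially in $j$. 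On the source side, $\Omega'$ is the same combinatorial model with corridors of comparable length but cross-section chosen so that the natural quasiconformal map $f\colon\Omega'\to\Omega$ (conformal/affine on each room, a straightforward stretch/twist map on each corridor, glued together) has bounded distortion.

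The key steps, in order, are: \textbf{(1)} write down the two domains explicitly with parameters $\ell_j, r_j$ (for $\Omega$) and $\ell_j', r_j'$ (for $\Omega'$) and define $f$ piecewise, checking that the gluing produces a global homeomorphism with distortion bounded independently of $j$; \textbf{(2)} verify that $\Omega$ is $s$-John for the chosen $s$ by exhibiting the John curves (they run from each room, down its corridor, to the central base point, and one computes $l(\gamma([0,t]))^s \lesssim d(\gamma(t),\partial\Omega)$ using the relation between $\ell_j$ and $r_j$); \textbf{(3)} verify the Gehring--Hayman inequality for both $\Omega$ and $\Omega'$ --- here one uses that in such tree-like/corridor domains the quasihyperbolic geodesic between two points essentially coincides (up to a length/diameter comparison) with the ``obvious'' path through the corridors, so that $D_{QH} \lesssim d_I$; \textbf{(4)} pick two sequences of points $x_j', y_j'$ in $\Omega'$ — for instance the two endpoints where corridor $T_j'$ meets its room and meets the base — so that $D_{QH}(x_j',y_j')$ is comparable to $\ell_j'$ (hence $\to\infty$, so $\log(1/D_{QH})$ is eventually negative or one rescales the whole picture to keep everything small) while $f(x_j'), f(y_j')$ are separated in $\Omega$ by a fixed amount (the room diameter does not shrink relative to the corridor diameter in the right units), forcing $D_I(f(x_j'),f(y_j'))$ to stay bounded below while the right-hand side of~\eqref{eq:internal uniform continuity} tends to $0$. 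Rescaling so that the whole $\Omega'$ has diameter $1$: the point is that $D_{QH}(x_j',y_j')$ can be made as large as we like while $D_I(f(x_j'),f(y_j'))$ stays bounded away from $0$, contradicting~\eqref{eq:internal uniform continuity} for $s>2$.

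The main obstacle I expect is \textbf{Step (3)}, establishing the Gehring--Hayman inequality simultaneously for both domains while keeping the corridors long: the whole subtlety of Theorem~\ref{example:sharp3} versus the earlier theorems is that here we are \emph{not} allowed to cheat by making the source domain badly behaved, so one must genuinely control quasihyperbolic geodesics. In a domain built from rooms and corridors this is believable --- a geodesic cannot profitably wander inside a room since the rooms have bounded eccentricity, and inside a corridor the geodesic is within a bounded factor of the straight axis --- but making this quantitative and uniform in $j$ requires care, especially at the junctions where corridors meet rooms, and one wants the implied constant in $D_{QH}\leq Cd_I$ independent of $j$. A secondary technical point is calibrating the exponent: one must check that $s>2$ is exactly the threshold at which the corridor geometry (length $\ell_j$ versus cross-section $r_j \approx \ell_j^{-(s-1)}$, combined with the room-to-corridor diameter ratio) produces the quantitative failure, i.e.\ that the separation $D_I(f(x_j'),f(y_j'))$ does not decay faster than $(\log \ell_j)^{-1/(s-1)}$ when $s\le 2$ (consistent with the weaker positive statement attributed to~\cite{hk05}) but does so fail when $s>2$ --- this is the arithmetic that pins down the range in the statement and should be carried out after the geometric estimates are in place.
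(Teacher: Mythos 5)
Your high-level strategy (rooms/corridors domains, a piecewise-defined quasiconformal map glued along the junctions, a hands-on verification of Gehring--Hayman by controlling quasihyperbolic geodesics in a tree-like structure) is the same in spirit as the paper's, and you correctly identify the Gehring--Hayman verification for \emph{both} domains as the main technical burden. However, there are two genuine gaps that would sink the argument as written.

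First, the parameters are inconsistent with the hypotheses. You take corridors of length $\ell_j\approx j\to\infty$ attached to a common central region, which makes $\Omega$ unbounded, whereas an $s$-John domain is by definition bounded; moreover, for a John curve running from a room through a corridor of length $\ell_j$ and radius $r_j$ to the base point, the condition $Cd(\gamma(t),\partial\Omega)\ge l(\gamma([0,t]))^s$ at the far end of the corridor forces $r_j\gtrsim \ell_j^{\,s}$, not $r_j\gtrsim\ell_j^{-(s-1)}$; with $\ell_j\to\infty$ this is absurd, so your step (2) fails. The corridors must have lengths tending to $0$ (in the paper, $\Omega$ is a telescope of levels $\Omega_j$ whose legs have length $2^{-(j+1)}$ and width $2^{-(j+1)s}$).

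Second, and more importantly, the mechanism you give for violating~\eqref{eq:internal uniform continuity} is inverted. You arrange $D_{QH}(x_j',y_j')\to\infty$ and $D_I(f(x_j'),f(y_j'))$ bounded below; but then $\log\frac{1}{D_{QH}(x_j',y_j')}\to-\infty$ and the right-hand side of~\eqref{eq:internal uniform continuity} is not tending to $0$ (it is not even well defined), so no contradiction results. To refute~\eqref{eq:internal uniform continuity} one needs pairs with $D_{QH}(x_j',y_j')\to 0$ in $\Omega'$ while $D_I(f(x_j'),f(y_j'))$ stays bounded below, and this forces $f$ to have \emph{unbounded} expansion: your $f$, being affine on rooms and a ``straightforward stretch'' between corridors of comparable length in source and target, provides no such mechanism. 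The paper achieves it by making level $j$ of $\Omega'$ a copy of the top of level $j$ of $\Omega$ scaled by $1/j$, connected by explicit exponential-type maps on the legs; the witnessing pair consists of the two ends of the level-$j$ rectangle, with $D_{QH}\approx 1/j$ in $\Omega'$ and $D_I\approx 1$ in $\Omega$. Without this unbounded scaling built into the source domain (and the attendant check that the distorted legs of $\Omega'$ still satisfy Gehring--Hayman, which is where the restriction $s>2$ is used), the construction does not produce a counterexample.
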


It would be interesting to know whether one can allow for $s=2$ in Theorem~\ref{example:sharp3}. 


\section{Proofs of the main results}

\begin{proof}[Proof of Theorem~\ref{example:sharpness}]

Our 2-John domain $\Omega$ will be constructed inductively as indicated in Figure~\ref{fig:domain1}.
\begin{figure}[h]
  \includegraphics[width=10.5cm]{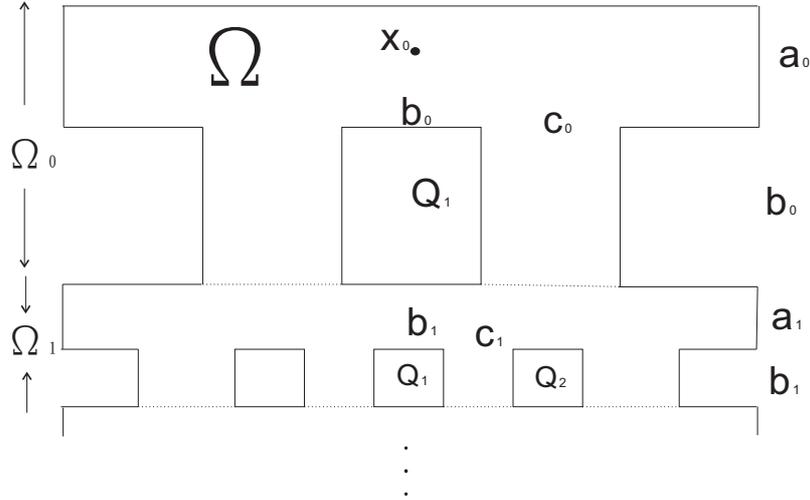}\\
  \caption{the 2-John domain $\Omega$}\label{fig:domain1}
\end{figure}

Set $a_j=2^{-2(j+1)}, b_j=2^{-j}$ and $c_j=2^{-2(j+1)}$. For $j=0$, we let the $\Omega_0$-part consist of a rectangle of length 1 and width $a_0$ and two rectangular ``legs" of width $c_0$ and length $b_0$. The two rectangular ``legs" are obtained in the following manner: first remove the central square $Q_1$ of side-length $b_0$; then set the distance between $Q_1$ and the vertical boundary of $\Omega_0$ to be $c_0$. Next, for $j=1$, we let the $\Omega_1$-part consist of a rectangle of length 1 and width $a_1$ and four rectangular ``legs" of width $c_1$ and length $b_1$. The four rectangular ``legs" are obtained in a similar fashion as before: first remove 3 squares of side-length $b_1$; then make them equi-distributed, \ie the gap between two consecutive squares is $c_1$; finally set the distance between $Q_2$ and the vertical boundary of  $\Omega_1$ to be $c_1$. We continue the process. Let the $\Omega_j$-part consist of a rectangle of length 1 and width $a_j$ and $2^j$ rectangular ``legs" of width $c_j$ and length $b_j$. The rectangular ``legs" are obtained by removing $2^{j+1}-1$ equi-distributed squares of side-length $b_j$ in a similar way as before. Among these removed squares, we label from middle to the right-most as $Q_1$, $Q_2$, $\dots$, $Q_{2^j}$ respectively. According to our construction, the distance between two consecutive removed squares is $c_j$ and the distance between $Q_{2^j}$ and the vertical boundary of $\Omega_j$ is also $c_j$. Finally, our domain $\Omega$ is the union of all $\Omega_j$'s. It is clear from the construction that $\Omega$ is 2-John and symmetric with respect to the $y$-axis.

We next construct our source domain $\Omega'$ and a quasiconformal mapping $g:\Omega'\to \Omega$, which is not uniformly continuous with respect to the metrics $d(x,y)=|x-y|$ in $\Omega$ and $d_I$ in $\Omega'$. Actually, we construct a quasiconformal mapping $f:\Omega\to\Omega'$ whose (quasiconformal) inverse has the desired properties.

The idea is demonstrated in Figure~\ref{fig:domain2}: we scale the upper part of each $\Omega_j$ by $\frac{1}{j}$ and replace the associated $2^{j+1}$ rectangular ``legs" by the same number of new ``legs". The vertical distance between the scaled upper parts of $\Omega_j$ and $\Omega_{j+1}$ is set to be $2j^{-2}$.
We also make the domain $\Omega'$ symmetric with respect to $y$-axis. 
Since the distance between two consecutive 
legs in $\Omega_j$ is  $2^{-j}$, the distance between the tops of two 
consecutive ``legs" in $\Omega_j'$ is $\frac{2^{-j}}{j}$. For the bottoms, the distance is approximately $\frac{2^{-j-1}}{j+1}$.
\begin{figure}[h]
  \includegraphics[width=12cm]{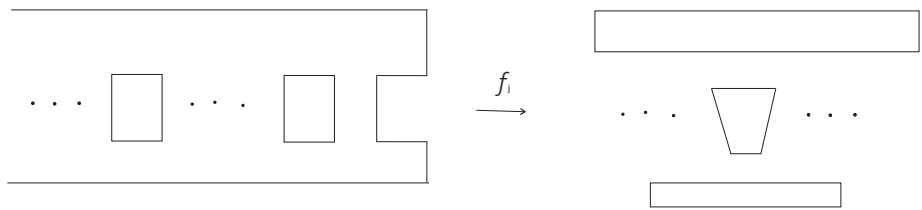}\\
  \caption{$\Omega$ and $\Omega'$ in the step $j$}\label{fig:domain2}
\end{figure}

Recall the labelled squares $Q_i$, $i=1,\dots,2^j$ introduced in $\Omega_j$. We denote by $\tilde{Q}_i$ the ``leg" next to $Q_i$, on the right.  We will construct a quasiconformal mapping $f_j$ from the (translated) rectangle $\tilde{Q}_i$ to the (translated) new ``leg" $Q_i'$ as in Figure~\ref{fig:domain3}. $Q_i'$ consists of two parts $A'$ and $B'$. The distance between the bottom line segment $0\textbf{a}$ and the top line segment in the $x$-direction is
\begin{align*}
m_i^j=\frac{[2^{-j-1}+2^{-2(j+1)}]\cdot i}{j}-\frac{[2^{-j-1}+2^{-2(j+1)}]\cdot i}{j+1}.
\end{align*}

It is clear that $m_i^j\approx \frac{i\cdot 2^{-j}}{j^2}$ when $j$ is large.
The distance of the top and the bottom in $y$-direction is $\frac{2}{j^2}$. In Figure~\ref{fig:domain3}, $\textbf{a}=(\frac{2^{-2(j+1)}}{j+1},0)$, $\textbf{p}=(\frac{i\cdot 2^{-j}}{2j^2},\frac{1}{j^2})$ and $\textbf{q}=(\frac{i\cdot 2^{-j}}{2j^2}+\frac{2^{-(j+1)}}{j},\frac{1}{j^2})$. We will write down below a quasiconformal mapping $f_j:A\to A'$ such that $f_j$ maps the bottom line segment of $A$ linearly to $0\textbf{a}$ and the top line segment of $A$ linearly to $\textbf{pq}$, respectively. The line $0\textbf{p}$ is of the form $y=k_1x$, where
\begin{align*}
k_1=\frac{1/j^2}{i\cdot 2^{-j}/(2j^2)}=\frac{2^{j+1}}{i}\geq 1.
\end{align*}
Similarly, the line $\textbf{aq}$ is of the form $y=k_2(x-\frac{2^{-2(j+1)}}{j+1})$, where
\begin{align*}
k_2=\frac{1/j^2}{\frac{i\cdot 2^{-j}}{2j^2}+\frac{ 2^{-(j+1)}}{j}-\frac{2^{-2(j+1)}}{j+1}}\approx \frac{2^j}{i+j}.
\end{align*}

\begin{figure}[h]
  \includegraphics[width=12cm]{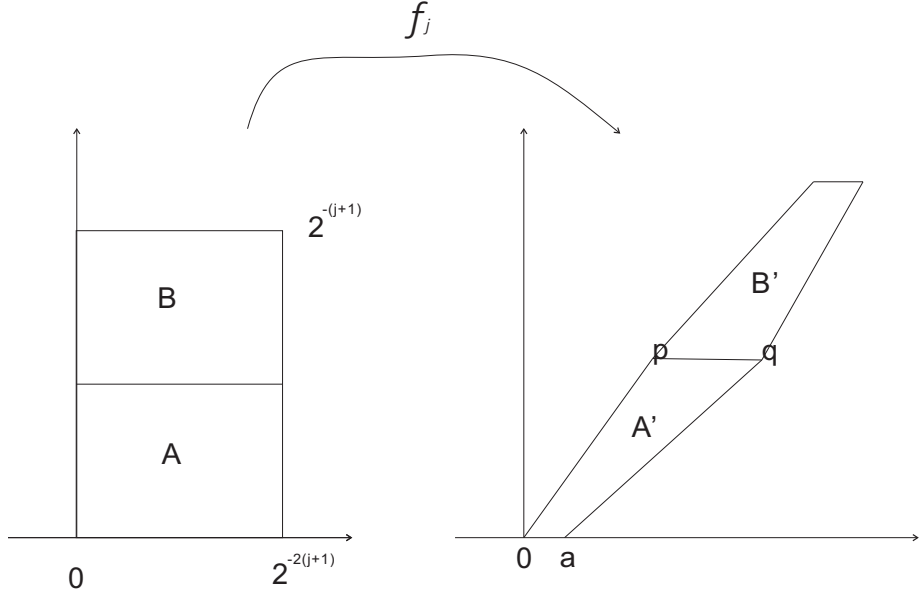}\\
  \caption{The quasiconformal mapping from $\tilde{ Q}_i$ to $Q_i'$}\label{fig:domain3}
\end{figure}

We are looking for a quasiconformal mapping of the form $f_j^i(x,y)=(\tilde{g_j}(y)x+g_j(y),k_1g_j(y))$, where $\tilde{g}_j(y)=k_1 g_j'(y)$ for all $y\in [0,2^{-(j+1)}]$ and $g_j$ is a smooth increasing function. Clearly, such a mapping $f_j$ maps horizontal line segments to horizontal line segments. We further require that it maps the left side of $A$ to $0\textbf{p}$ and the right side of $A$ to $\textbf{aq}$, $g_j(0)=0$, $g_j(2^{-j})=\frac{1}{j^2}$ and $\tilde{g}_j(0)=\frac{1}{j+1}$. By definition,
\begin{align*}
f_j^i(2^{-2(j+1)},y)=(\tilde{g}_j(y)\cdot 2^{-2(j+1)}+g_j(y),k_1g_j(y)).
\end{align*}
The further requirements are satisfied if  $\tilde{g}_j=k_1g_j'$,
\begin{align}\label{eq:1}
g_j(y)=k_2\cdot k_1^{-1}\tilde{g}_j(y)\cdot 2^{-2(j+1)}+\frac{k_2}{k_1}g_j(y)-\frac{k_2}{j}\cdot 2^{-2(j+1)},
\end{align}
\begin{align}\label{eq:2}
g_j(0)=0, g_j(2^{-j})=\frac{1}{j^2}\quad\text{and}\quad \tilde{g}_j(0)=\frac{1}{j+1}.
\end{align}
One can easily solve the above system of equations by setting
$g_j(y)=a\cdot e^{a_j^i y+c}-b$, where
\begin{align*}
a_i^j=2^{2(j+1)}\frac{k_1-k_2}{k_1k_2},\quad b=\frac{1}{k_1(j+1)a_j^i}
\end{align*}
and the constants $b$ and $c$ are chosen such that
\begin{align*}
a\cdot e^c=b\quad \text{and}\quad a\cdot e^{a_j^i 2^{-j}+c}-b=\frac{1}{j^2}.
\end{align*}
We next show that $f_j^i$ is a quasiconformal mapping. A direct computation gives us
\begin{align*}
Df_j^i(x,y)=\begin{bmatrix}
 \tilde{g}_j(y) & \tilde{g}_j'(y)x+g_j'(y) \\
 0 & k_1g_j'(y)
\end{bmatrix}.
\end{align*}
We only need to show that $\tilde{g}_j'(y)x+g_j'(y)\leq  Mk_1g_j'(y)$, for some constant $M$ independent of $i$ and $j$, and for all $x, y\in A$. Since $k_1\geq 1$, it suffices to bound $\tilde{g}_j'(y)x$.
By definition,
\begin{align*}
\tilde{g}_j(y)=k_1g_j'(y)=k_1aa_j^ie^{a_j^iy+c}
\end{align*}
and
\begin{align*}
\tilde{g}_j'(y)=k_1a_j^ig_j'(y).
\end{align*}
Hence we only need to find a uniform bound on $x\cdot a_j^i$. For this, we first note that $k_2$ is bounded from below by $\frac{1}{2}$ and $\frac{k_1-k_2}{k_1}\leq 1$.  Since $x\in [0,2^{-2(j+1)}]$, we have
\begin{align*}
a_j^ix\leq \frac{k_1-k_2}{k_1k_2}\cdot 2^{2(j+1)}x\leq 2.
\end{align*}

This implies that $\tilde{g}_j'(y)x+g_j'(y)\leq 3k_1g_j'(y)$ and so $f_j^i$ is quasiconformal. Notice that $f_j^i(x,0)=(\frac{x}{j+1},0)$, so that, after suitable translations, $f_j^i$ matches with our scaling on the top of $\Omega_{j+1}$. In a similar manner, one can write down a quasiconformal mapping from $B$ to $B'$ such that it coincides with $f_j^i$ on $\textbf{pq}$ and is linear on each line segment. In fact, the quasiconformal mapping just slightly differs from the reflection of $f_j$ with respect to the line segment $\textbf{pq}$ (since the length of $0\textbf{a}$ is approximately the same as the length of the top line segment when $j\to \infty$ and the picture is exactly a reflection with respect to $\textbf{pq}$). When a suitable coordinate system is fixed, it is clear that the mappings $f_j^{i_1}$ and $f_j^{i_2}$ only differ by a translation in $x$-direction and hence the desired global quasiconformal mapping $f_j$ from $\Omega_j$ to $\Omega_j'$ follows by gluing all $f_j^i$'s and the scaling maps.

In this manner, the domain $\Omega'$ is well-defined. We can define the quasiconformal mapping $g:\Omega'\to \Omega$ by setting $g|_{\Omega_j'}=f_j^{-1}$. Moreover, $g$ cannot be uniformly continuous since for each $j\in \mathbb{N}$, it maps a rectangle of length $\frac{1}{j}$ linearly to a rectangle of length $1$.

\end{proof}

\begin{proof}[Proof of Corollary~\ref{coro:co-example to hk}]
Let $\Omega'$ and $\Omega$ be the domains given in the proof of Theorem~\ref{example:sharpness}. Let $x_0$ be the point marked in Figure~\ref{fig:domain1}. It is easy to check that the assumption~\eqref{eq:QH growth} is satisfied and hence the claim follows.
\end{proof}

\begin{proof}[Proof of Theorem~\ref{example:sharpness2}]
We will give the detailed constructions of our domains and quasiconformal mapping for $n=3$ and indicate how to pass it to all dimensions at the end of the proof. The idea of the 3-dimensional construction is similar to the one above and we simply fatten the ``$\Omega_0$" part of the planar domain in Figure~\ref{fig:domain1} along the third direction; see Figure~\ref{fig:domain7} below. 

\begin{figure}[h]
  \includegraphics[width=8cm]{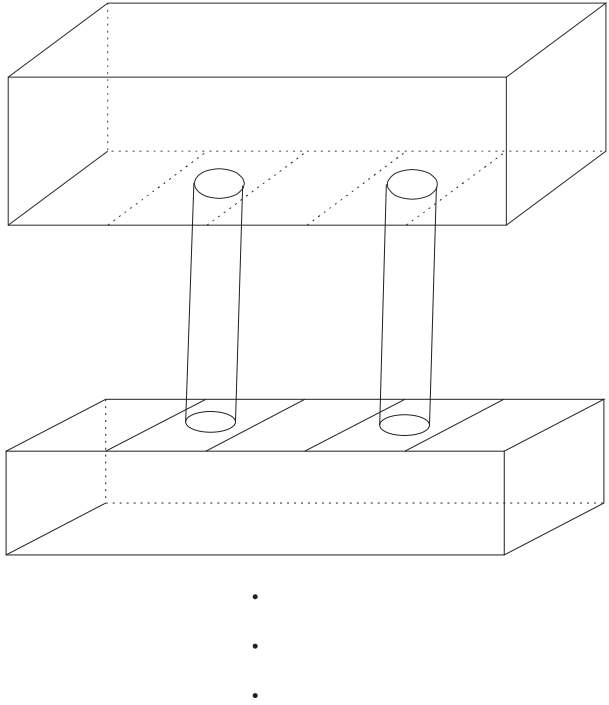}\\
  \caption{The first part of our domain $\Omega$} \label{fig:domain7}
\end{figure}

The top part of Figure~\ref{fig:domain7} consists of a rectangle of length 1, width $\frac{1}{2^2}$ and height $\frac{1}{2^3}$. In the bottom, the rectangle has length 1, width $\frac{1}{2^4}$ and height $\frac{1}{2^6}$. We attach four cylindrical ``legs" of height $2\cdot 2^{-2}$ between these rectangles. The radius of the cylinder is about $2^{-3}$ and the distance between them is about $2^{-2}$.

We can proceed our construction in the following manner. At step $j$, the top part consists of a rectangle of length 1, width $2^{-2j}$ and height $2^{-3j}$. In the bottom, the rectangle has length 1, width $2^{-2(j+1)}$ and height $2^{-3(j+1)}$. We attach $2^{2j}$ equi-distributed cylindrical ``legs" of height $2^{-2j}$ between them. The radius of the cylinder is about $2^{-3j}$ and the distance between two consecutive cylinders is about $h_j=j\cdot 2^{-2j}$. It is clear from our construction that $\Omega$ is an $s$-John domain for any $s\in (1+\frac{1}{2},\infty)$.

Our source domain $\Omega'$ is obtained by a similar scaling procedure as in the proof of Theorem~\ref{example:sharpness}. To be more precise, at step $j$, we scale the top rectangle by $\frac{1}{j^2}$ and replace the associated $2^j$ cylindrical ``legs" by the same number of new ``legs". The vertical distance between the scaled top rectangle and the bottom rectangle is set to be $h_j'=\frac{2}{j^2}$.

\begin{figure}[h]
  \includegraphics[width=10cm]{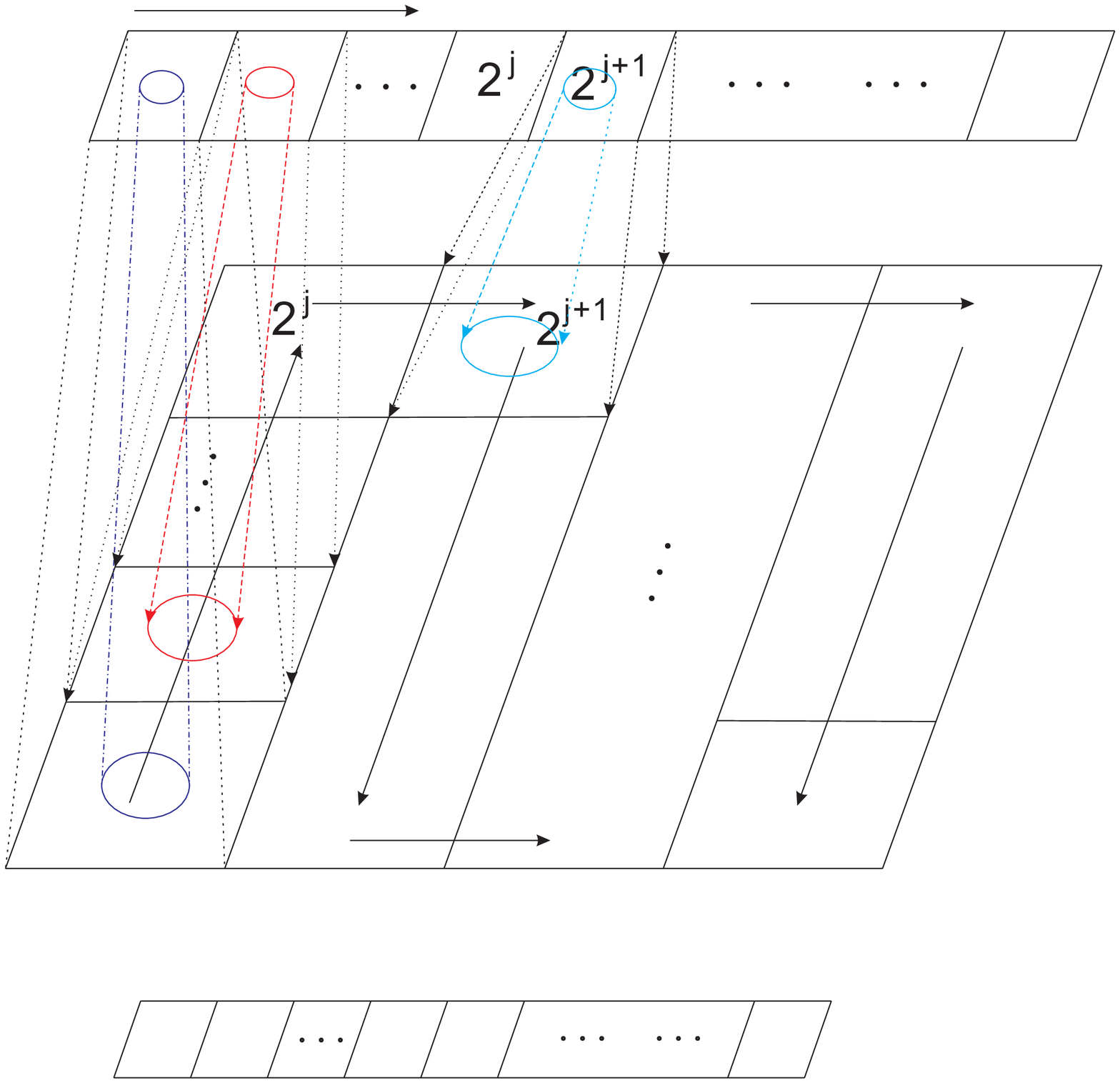}\\
  \caption{The new ``legs" at step $j$}\label{fig:domain10}
\end{figure}

We next explain how to select the new ``legs", see Figure~\ref{fig:domain10} for a top view. In Figure~\ref{fig:domain10}, the top rectangle has length $\frac{1}{j^2}$ and width $\frac{2^{-2j}}{j^2}$. It consists of $2^{2j}$ squares of side-length $\frac{2^{-2j}}{j^2}$. The bottom rectangle has length $\frac{1}{(j+1)^2}$ and width $\frac{2^{-2(j+1)}}{(j+1)^2}$. The vertical distance between these rectangles is $h_j'$.  We insert a square $S_j$ of side-length $\frac{1}{j^2}$ in the middle of the two rectangles, \ie the (vertical) distance between $S_j$ and either of the rectangles is $\frac{1}{j^2}$. We divide $S_j$ into $2^{2j}$ subsquares of side-length $\frac{2^{-j}}{j^2}$. Next, we set up a one-to-one correspondence between the $2^{2j}$ squares in the top rectangle and the subsquares in $S_j$. To be more precise, we first construct $2^{2j}$ affine ``rectangles" between each square in the top rectangle and each subsquare in $S_j$ and then we insert a ``cylindrical leg" inside each affine ``rectangle", see Figure~\ref{fig:domain10} for the order of the affine ``rectangles". The radius of the top circle of the ``cylindrical leg" is set to be $\frac{2^{-3j}}{j^2}$ and the radius of the bottom circle is $\frac{2^{-j}}{j^2}$. Since the $2^{2j}$ affine ``rectangles" have disjoint interiors, the $2^{2j}$ ``cylindrical legs" are pairwise disjoint. As in the proof of Theorem~\ref{example:sharpness}, we use a similar construction between $S_j$ and the bottom rectangle.

Reasoning as in the proof of Theorem~\ref{example:sharpness}, we only need to write down quasiconformal mappings between these ``legs". Note that our construction implies that all the $2^{2j}$ ``cylindrical legs" are bi-Lipschitz equivalent, with a constant independent of $j$. So finally we reduce the problem to the existence of a quasiconformal mapping $g$ as in Figure~\ref{fig:domain6}.  
	
\begin{figure}[h]
  \includegraphics[width=10cm]{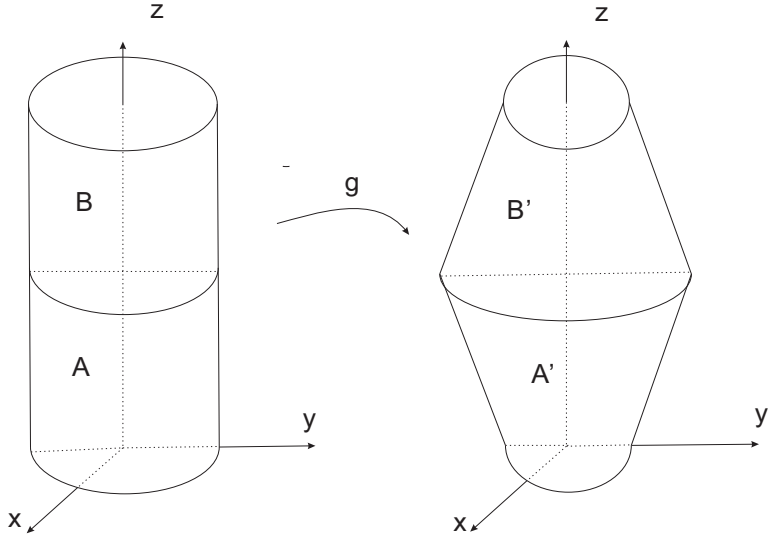}\\
  \caption{The quasiconformal mapping from a ``cylinder" to a  ``double cone"}\label{fig:domain6}
\end{figure}	
	
We will use the coordinate system marked in Figure~\ref{fig:domain6} and write down a quasiconformal mapping $g$ from $A$ onto $A'$ such that $g$ is a scaling between the bottom disks. Set
\begin{equation*}
g(x,y,z)=(g_1(z)x,g_1(z)y,g_2(z)).
\end{equation*}
We require that $g_1(0)=\frac{1}{j^2}$, $g_1(h_j)=\frac{2^{2j}}{j^2}$, $g_2(0)=0$, $g_2(h_j)=h_j'$ and $g_2'(z)=g_1(z)$ for all $z\in [0,h_j]$. It is easy to check that with these requirements, $g$ will be a quasiconformal mapping that maps $A$ to $A'$ such that $g$ is the desired scaling between the bottom disks. One can use the map $g_2$ of the form $g_2(z)=a_j(e^{b_jz}-1)$, where $a_j\approx \frac{2^{-2j}}{j^2}$ and $b_j\approx 2^{2j}$. We leave the simple verification to the interested readers.

As in the planar case, the global quasiconformal mapping $f:\Omega'\to\Omega$ is obtained by gluing all these $g'$s and the corresponding scaling mappings.
Moreover, reasoning as in the planar case, we can easily conclude that $f$ cannot be uniformly continuous with with respect to the metrics $d(x,y)=|x-y|$ in $\Omega$ and $d_I$ in $\Omega'$. 

The construction of the general $n$-dimensional case can be proceeded in a similar manner. In step $j$, $\Omega_j$ consists of a $n$-dimensional rectangle of length $a_1=1$ and (other) edge-lengths $a_2=\cdots=a_{n-1}=2^{-(n-1)j}$, $a_n=2^{-nj}$ and $2^j$ ``cylindrical legs" of length $h_j=j\cdot 2^{-(n-1)j}$. The radius of the cylinder is $2^{-nj}$. So $\Omega$ is an $s$-John domain for any $s\in (1+\frac{1}{n-1},\infty)$.

The source domain $\Omega'$ is obtained by a similar scaling procedure as before. To be more precise, at step $j$, we scale the top rectangle by $\frac{1}{j^2}$ and replace the associated $2^j$ cylindrical ``legs" by the same number of new ``legs". The vertical distance between the scaled top rectangle and the bottom rectangle is set to be $h_j'=\frac{2}{j^2}$.

We use a similar idea as before to obtain new ``legs" between the top rectangle and bottom rectangle as in Figure~\ref{fig:domain10}. Namely, we insert a $(n-1)$-dimensional cube of edge-length $\frac{1}{j^2}$ and then divide it into $2^{(n-1)j}$ subcubes of edge-length $\frac{2^{-j}}{j^2}$. Then attach $2^{(n-1)j}$ affine ``rectangles" in a similar manner as before. Inside each affine ``rectangle", we insert a ``cylindrical leg". The radius of the top of the ``cylindrical leg" is $\frac{2^{-nj}}{j^2}$ and the radius of the bottom is $\frac{2^{-j}}{j^2}$. Reasoning as before, one essentially only needs to write down a quasiconformal mapping $g$ between these ``legs". 

The global quasiconformal mapping $f:\Omega'\to\Omega$ is obtained by gluing all these $g'$s and the corresponding scaling mappings.
Moreover, reasoning as in the planar case, we can easily conclude that $f$ cannot be uniformly continuous with with respect to the metrics $d(x,y)=|x-y|$ in $\Omega$ and $d_I$ in $\Omega'$.

\end{proof}

%
%
%

\begin{proof}[Proof of Corollary~\ref{coro:co-example to hk2}]
Let $\Omega'$ and $\Omega$ be the domains given in the proof of Theorem~\ref{example:sharpness2}. Let $x_0$ be the central point in the first rectangle of $\Omega$. It is easy to check that the assumption~\eqref{eq:QH growth2} is satisfied and hence the claim follows.
\end{proof}

\begin{proof}[Proof of Theorem~\ref{example:sharp3}]

The idea of the construction is similar to that used in the proof of Theorem~\ref{example:sharpness}. When $n=2$ and $s>2$, our $s$-John domain $\Omega\subset\bR^2$ is the same as that in Figure~\ref{fig:domain1} except possible differences in the parameters. When $n\geq 3$ and $s>2$, we simply fatten the planar picture. In the following, we point out the difference of the construction of the $s$-John domain in the planar case and give a more detailed construction of the 3-dimensional analog, while indicating the general construction in the end of the proof.

We first consider the case $n=2$. Fix $s\in (2,\infty)$. Let $\Omega$ be the domain given as in Figure~\ref{fig:domain1} with $a_j=2^{-(j+1)}$, $b_j=2^{-(j+1)}$ and $c_j=2^{-(j+1)s}$. Then $\Omega$ is an $s$-John domain. The domain $\Omega'$ is constructed in a similar fashion and finally we need a quasiconformal mapping from $\tilde{Q}_i$ onto $Q_i'$ as in Figure~\ref{fig:domain3}. The only difference from Figure~\ref{fig:domain3} is that instead of $\frac{2^{-j}}{j}$, we set $d(p,q)=\frac{2^{-j(s-1)}}{j}$. One can check the desired quasiconformal mapping $f$ is of the same form as in Figure~\ref{fig:domain3} with the obvious replacement of parameters. Clearly $f$ is not uniformly continuous with respect to the metrics $d(x,y)=|x-y|$ in $\Omega$ and $d_I$ in $\Omega'$.

We next verify that  $\Omega$ satisfies the Gehring-Hayman inequality, \ie we need to show that
\begin{align}\label{eq:desired}
l([x,y])\leq Cd_I(x,y)
\end{align}
for all $x,y\in \Omega$. We first consider the case that $x,y\in \Omega_j$ for some $j\in \mathbb{N}$. 
Recall that the $\Omega_j$-part consists of a rectangle of $Q_j$ and $2^j$ rectangular ``legs" $Q_{ji}$, 
$i=1,2,\dots,2^j$. If both $x$ and $y$ lie in the rectangle $Q_j$ or both lie in some ``leg" $Q_{ji}$,
 then the length of $[x,y]$ essentially equals to the length of the line segment $\overline{xy}$ that connects $x$ and $y$ 
 and so~\eqref{eq:desired} holds.  If $x\in Q_{jk}$ and $y\in Q_{jl}$, $k,l\in \{1,2,\dots,2^j\}$ and
  $k<l$, then $[x,y]$ can be essentially written as $\gamma_{xz}\cup \overline{zw}\cup\gamma_{wy}$, 
  where $z$ is a point on the (horizontal) core line segment of $Q_j$ with the same first 
  coordinate as $x$ such that $\gamma_{xz}$ is essentially the Euclidean geodesic 
  $\overline{xz}$ and $w$ is a point on the (horizontal) core line segment of $Q_j$ 
  with the same first coordinate as $y$ such that $\gamma_{wy}$ is essentially the 
  Euclidean geodesic $\overline{wy}$. Since $d_I(x,y)$ is comparable to the
   difference of $x$ and $y$ in horizontal directions,  we easily obtain~\eqref{eq:desired}. 
   If $x\in Q_{ji}$ and $y\in Q_j$, then $[x,y]$ can be essentially written as 
  $\gamma_{xz}\cup \overline{zy}$, where $z$ is a point on the (horizontal) 
  line segment of $Q_j$ with the same first coordinate as $x$ and the same second 
  coordinate as $y$ such that $\gamma_{xz}$ is essentially the Euclidean geodesic 
  $\overline{xz}$. Thus~\eqref{eq:desired} holds in this case as well. Similar arguments 
  applies for the case when $x\in \Omega_j$ and $y\in \Omega_{l}$ with $|j-l|=1$.

Now we may assume that $x\in \Omega_j$ and $y\in \Omega_l$, $j-1> l\in \mathbb{N}$. 
Consider first the case $x\in Q_j$ and $y\in Q_l$. We may assume that the first
 coordinate of $x$ is smaller than or equal to the first coordinate of $y$. 
 Let $Q_{j-1,i}$ be the nearest ``leg" (in $\Omega_{j-1}$) to $x$. Then the
  quasihyperbolic geodesic $[x,y]$ goes through $Q_{j-1,i}$, then follows the
   closest ``leg" in $\Omega_{j-2}$ until it reaches $Q_{l}$, and then 
   essentially goes along the Euclidean geodesic to $y$. We may write 
   $[x,y]$ as $\cup_{k=l}^j [x,y]_k$, where $[x,y]_k=[x,y]\cap \Omega_k$.
    Our construction of $\Omega$ implies that there exists  a curve 
    $\beta_{xy}$ such that $d_I(x,y)=l(\beta_{xy})$ and that 
    $|l(\beta_{xy}^k)-l([x,y]_k)|\leq 2^{-k+1}$, $k=l,\dots,j$,
     where $\beta_{xy}^k=\beta_{xy}\cap \Omega_k$. Notice that
\begin{align*}
l([x,y])=l_1+l(\cup_{l<k<j}[x,y]_k)+l_2,
\end{align*}
where $l_1=l([x,y]_j)$ and $l_2=l([x,y]_l)$. It is clear that
\begin{align*}
l(\cup_{l<k<j}[x,y]_k)\approx \sum_{l<k<j}(2^{-ks}+2^{-k})\approx 2^{-(l-1)}.
\end{align*}
Similarly,
\begin{align*}
l(\beta_{xy})=l_1'+l(\cup_{l<k<j}\beta_{xy}^k)+l_2',
\end{align*}
where $l_1'=l(\beta_{xy}^j)$ and $l_2'=l(\beta_{xy}^l)$ and
\begin{align*}
l(\cup_{l<k<j}\beta_{xy}^k)\approx \sum_{l<k<j}(2^{-ks}+2^{-k})\approx 2^{-(l-1)}.
\end{align*}
In this case, since $l_1<2^{-(l-1)}$,  we may assume that $l_2\geq   2^{-l+2}$. 
However, since $|l_2-l_2'|\leq 2^{-l+1}$, we obtain that
\begin{align*}
2l_2'\geq 2(l_2-2^{-l+1})\geq l_2.
\end{align*}
Thus~\eqref{eq:desired} holds in this case as well. The other cases can be proved via a
similar argument. Therefore, we have verified~\eqref{eq:desired} in $\Omega$.
 
To verify that $\Omega'$ satisfies the Gehring-Hayman inequality is more complicated. We first consider
the case $x',y'\in \Omega_j'$ for some $j\in \mathbb{N}$. Recall that the $\Omega_j'$-part consists 
of a rectangle of $Q_j'$ and $2^j$ cone-like ``legs" $Q_{ji}'$,
$i=1,2,\dots,2^j$. If both $x'$ and $y'$ lie in the rectangle $Q_j'$ or both lie in some ``leg" 
$Q_{ji}'$,
 then the length of $[x',y']$ essentially equals to the length of the line segment $\overline{x'y'}$ that connects $x'$ and $y'$
 and so the Gehring-Hayman inequality holds. If $x'\in Q_{jk}'$ and $y'\in Q_{jl}'$, 
 $k,l\in \{1,2,\dots,2^j\}$ and $k<l$, then there are two different cases for the geodesic $\gamma'$ that 
 connects $x'$ and $y'$ in $\Omega'$: either $\gamma'$ first goes up from $x'$, passes through $Q_j'$ and then goes
 down to $y'$ or first goes down from $x'$, passes through $Q_{j+1}'$, and then goes up to $y'$. Essentially 
 we have to deal with two cases, either both $x'$ and $y'$ are close to $Q_{j}'$ or both $x'$ and $y'$ are 
 close to $Q_{j+1}'$.  For this, we need the following two basic facts:  firstly, if $x',y'\in Q_j'$, then $k_{\Omega'}(x',y')=k_\Omega(f(x'),f(y'))$. Secondly, if $x',y'\in Q_{ji}'$, then $k_{\Omega'}(x',y')\approx k_\Omega(f(x'),f(y'))$. For example, if both $x'$ and $y'$ are close to $Q_{j}'$, then the quasihyperbolic geodesic $[x',y']$ has to go up from $x'$, pass through $Q_j'$ and then go down to $y'$, since otherwise, the quasihyperbolic distance will be much bigger. If $x'\in Q_{ji}'$ and $y'\in Q_j'$, then $[x',y']$ goes through $Q_{ji}'$ to $Q_j'$ and then essentially follows an Euclidean geodesic to $y'$. Thus, the Gehring-Hayman inequality holds. Similar arguments 
  applies for the case when $x'\in \Omega_j'$ and $y'\in \Omega_{l}'$ with $|j-l|=1$.
 
Now we may assume that $x'\in \Omega_j'$ and $y\in \Omega_l'$, $j-1> l\in \mathbb{N}$. 
Consider first the case $x'\in Q_j'$ and $y'\in Q_l'$. Let $Q_{j-1,i}'$ be the nearest ``leg" (in $\Omega_{j-1}'$) to $x'$. Then the
  quasihyperbolic geodesic $[x',y']$ goes through $Q_{j-1,i}'$, then follow the
   closest ``leg" in $\Omega_{j-2}'$ until it reaches $Q_{l}'$, and then 
   essentially goes along the Euclidean geodesic to $'y$. We may write 
   $[x',y']$ as $\cup_{k=l}^j [x',y']_k$, where $[x',y']_k=[x',y']\cap \Omega_k'$.
    Our construction of $\Omega'$ implies that there exists  a curve 
    $\beta_{x'y'}$ such that $d_I(x',y')=l(\beta_{x'y'})$ and that 
    $|l(\beta_{x'y'}^k)-l([x',y']_k)|\leq \frac{2}{k^2}$, $k=l,\dots,j$,
     where $\beta_{x'y'}^k=\beta_{x'y'}\cap \Omega_k'$. Notice that
\begin{align*}
l([x',y'])=l_1+l(\cup_{l<k<j}[x',y']_k)+l_2,
\end{align*}
where $l_1=l([x',y']_j)$ and $l_2=l([x',y']_l)$. It is clear that
\begin{align*}
l(\cup_{l<k<j}[x',y']_k)\approx \sum_{l<k<j}(\frac{1}{k^2}+\frac{2^{-k}}{k})\approx \frac{1}{(l-1)^2}.
\end{align*}
Similarly,
\begin{align*}
l(\beta_{x'y'})=l_1'+l(\cup_{l<k<j}\beta_{x'y'}^k)+l_2',
\end{align*}
where $l_1'=l(\beta_{x'y'}^j)$ and $l_2'=l(\beta_{x'y'}^l)$ and
\begin{align*}
l(\cup_{l<k<j}\beta_{x'y'}^k)\approx \sum_{l<k<j}(\frac{1}{k^2}+\frac{2^{-k}}{k})\approx \frac{1}{(l-1)^2}.
\end{align*}
In this case, since $l_1<\frac{2}{(l-1)^2}$,  we may assume that $l_2\geq  \frac{4}{(l-2)^2}$. 
However, since $|l_2-l_2'|\leq \frac{2}{l^2}$, we obtain that
\begin{align*}
2l_2'\geq 2(l_2-\frac{2}{l^2})\geq l_2.
\end{align*}
Thus the Gehring-Hayman inequality holds in this case as well. The other cases can be proved via a
similar argument. Therefore, we have verified the Gehring-Hayman inequality in $\Omega'$.
 

The 3-dimensional construction is similar and we simply fatten the ``$\Omega_0$" part of the planar domain in Figure~\ref{fig:domain1} along the third direction; see Figure~\ref{fig:domain7}.

The top part of Figure~\ref{fig:domain7} consists of a rectangle of length 1, width $\frac{1}{2}$ and height $\frac{1}{2}$. In the bottom, the rectangle has length 1, width $\frac{1}{4}$ and height $\frac{1}{4}$. We attach two cylindrical ``legs" of height $2^{-1}$ between these rectangles. The radius of the cylinder is about $2^{-s}$ and the distance between them is about $2^{-1}$.

We can proceed our construction in the following manner. At step $j$, the top part consists of a rectangle of length 1, width $2^{-j}$ and height $2^{-j}$. In the bottom, the rectangle has length 1, width $2^{-j-1}$ and height $2^{-j-1}$. We attach $2^j$ equi-distributed cylindrical ``legs" of height $2^{-j}$ between them. The radius of the cylinder is about $2^{-js}$ and the distance between two consecutive cylinders is about $2^{-j}$. It is clear from our construction that $\Omega$ is an $s$-John domain.

Our source domain $\Omega'$ is obtained by a similar scaling procedure as in the proof of Theorem~\ref{example:sharpness}. To be more precise, at step $j$, we scale the top rectangle by $\frac{1}{j}$ and replace the associated $2^j$ cylindrical ``legs" by the same number of new ``legs". The vertical distance between the scaled top rectangle and the bottom rectangle is set to be $\frac{2}{j^2}$.

\begin{figure}[h]
  \includegraphics[width=10cm]{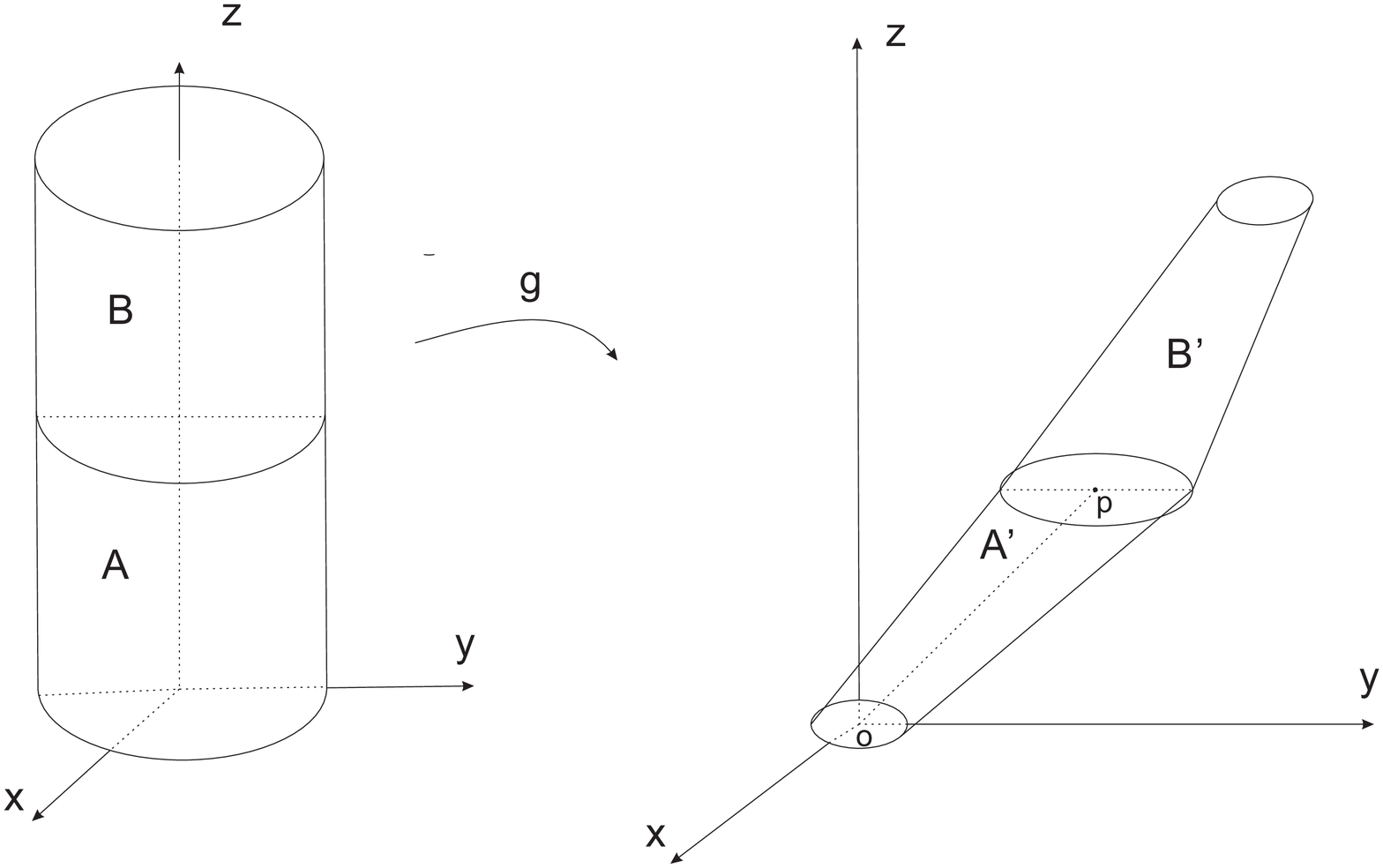}\\
  \caption{A quasiconformal mapping $g$ between ``legs"} \label{fig:domain8}
\end{figure}

The new ``legs" are obtained by rotating the corresponding new ``legs" in Figure~\ref{fig:domain3} along the third direction, see Figure~\ref{fig:domain8}.
Reasoning as in the proof of Theorem~\ref{example:sharpness}, we only need to write down a quasiconformal mappings between these ``legs". For this, we use the coordinate system marked in Figure~\ref{fig:domain8} and look for $g$ of the form
\begin{align*}
g(x,y,z)=(g_1(z)x,g_1(z)y+g_2(z),g_3(z)).
\end{align*}
We require that $g_1(0)=\frac{1}{j}$, $g_2(0)=g_3(0)=0$, $g_1(2^{-j})=\frac{2^j}{j}$, $g_2(2^{-j})\approx \frac{1}{2j^2}$ and $g_3(2^{-j})=\frac{1}{2j^2}$. We can take
\begin{align*}
g_1(z)=a_jb_je^{b_jz},\quad g_2(z)\approx g_3(z)=a_j(e^{b_jz}-1),
\end{align*}
where $a_j\approx 2^{-j}\cdot j^{-2}$ and $b_j\approx j\cdot 2^j$. It is easy to check that $g$ is a quasiconformal mapping with the desired property and we leave the detailed verification to the interested author.

As in the planar case, the global quasiconformal mapping $f:\Omega'\to\Omega$ is obtained by gluing all these $g'$s and the corresponding scaling mappings.
Moreover, reasoning as in the planar case, we can easily conclude that $f$ cannot be uniformly continuous with with respect to the metrics $d(x,y)=|x-y|$ in $\Omega$ and $d_I$ in $\Omega'$. Following the arguments used in the planar case, we easily deduce that both $\Omega'$ and $\Omega$ satisfies the Gehring-Hayman inequality.
 
The construction of the general $n$-dimensional case can be proceeded in a similar manner. In step $j$, $\Omega_j$ consists of a $n$-dimensional rectangle of length $a_1=1$ and (other) side-lengths $a_2=\cdots=a_n=2^{-j}$ and $2^j$ ``cylindrical legs". The radius of the cylinder is $2^{-js}$. So $\Omega$ is an $s$-John domain.
The source domain $\Omega'$ is obtained by a similar scaling procedure as before. To be more precise, at step $j$, we scale the top rectangle by $\frac{1}{j}$ and replace the associated $2^j$ cylindrical ``legs" by the same number of new ``legs". The vertical distance between the scaled top rectangle and the bottom rectangle is set to be $\frac{2}{j^2}$.

The new ``legs" are obtained by rotating the corresponding new ``legs" in Figure~\ref{fig:domain3} along the $n$-th direction, see Figure~\ref{fig:domain8}.
Reasoning as in the proof of Theorem~\ref{example:sharpness}, we only need to write down a quasiconformal mappings between these ``legs". We leave the remaining verifications to the interested readers.
\end{proof}


\end{document}